\documentclass[12pt,a4paper]{article}
\usepackage{ucs}
\usepackage[applemac]{inputenc}
\usepackage[T1]{fontenc}
\usepackage[english]{babel}		
\usepackage{
	amsmath,
	amsthm,
	amssymb,
	amsfonts,
	mathtools
}
\usepackage[dvips]{hyperref}
\hypersetup{	
	colorlinks,
	linkcolor=black,
	citecolor=black,
	urlcolor=black,
	breaklinks=true
}
\DeclareMathAlphabet{\mathpzc}{OT1}{pzc}{m}{it}
\usepackage{geometry}
\geometry{includeheadfoot,bottom=1.5cm,top=1.5cm,right=2cm,left=3cm,twoside}
\setlength{\parindent}{0cm}
\setlength{\arraycolsep}{1pt}
\setlength{\headheight}{15pt}
\newtheoremstyle{lemma}{\topsep}{\topsep}
	{\itshape}
	{}
	{\bfseries}
	{.}
	{\newline}
	{\thmname{#1}\thmnumber{ #2}\thmnote{ #3}}	
\theoremstyle{lemma}
	\newtheorem{lemma}{Lemma}[section]
	\newtheorem{proposition}[lemma]{Proposition}
	\newtheorem{corollary}[lemma]{Corollary}
	\newtheorem{theorem}[lemma]{Theorem}

\newtheoremstyle{definition}{\topsep}{\topsep}
	{}
	{}
	{\bfseries}
	{.}
	{\newline}
	{\thmname{#1}\thmnumber{ #2}\thmnote{ #3}}	
\theoremstyle{definition}
	\newtheorem{definition}[lemma]{Definition}
	\newtheorem{remark}[lemma]{Remark}
	\newtheorem{example}[lemma]{Example}

\newcommand{\he}{\ensuremath{\alpha}}

\newcommand{\R}{\ensuremath{\mathbb{R}}}

\newcommand{\HM}{\ensuremath{\mathcal{H}}}

\newcommand{\M}{\ensuremath{\mathcal{M}}}

\DeclarePairedDelimiter\norm{\lVert}{\rVert}

\newcommand{\dHM}{\ensuremath{\,\text{d}\HM^{\he}}}

\renewcommand{\phi}{\varphi}
\renewcommand{\epsilon}{\varepsilon}

\hypersetup{
	pdftitle={A characterisation of inner product spaces by the maximal circumradius of spheres},
	pdfsubject={},
	pdfauthor={Sebastian Scholtes},
	pdfkeywords={},
	pdfcreator={},
	pdfproducer={}
}

\makeindex
\begin{document}

\title{A characterisation of inner product spaces by the maximal circumradius of spheres}
\author{\href{mailto:sebastian.scholtes@rwth-aachen.de}{Sebastian Scholtes}}
\date{\today}
\maketitle

\begin{abstract}
	We give a new characterisation of inner product spaces amongst normed vector spaces in terms of the maximal cirumradius of spheres. It turns out that a 
	normed vector space $(X,\norm{\cdot})$ with $\dim X\geq 2$ is an inner product space if and only if all spheres are not degenerate, i.e. the maximal circumradius 
	of points on the sphere equals the radius of the sphere or more formally 
	 \begin{align*}
	 	\sup_{\substack{u,v,w\in \partial B_{r}(x_{0})\\u\not=v\not=w\not=u}}r(u,v,w)=r\quad\text{for all }x_{0}\in X\text{ and all }r>0.
	\end{align*}
\end{abstract}
\centerline{\small Mathematics Subject Classification (2000): 46C15; 46B20}
\bigskip

It is an important and particularly well researched topic to find conditions that classify inner product spaces amongst, say, mere normed vector spaces. This area of research
really took off in 1935, where several of these characterisations were given. Best-know of these characterisations is the one by Jordan and von Neumann \cite{Jordan1935a}.
They showed that an inner product, which induces the norm, can be defined in a normed vector space if and only if the parallelogram law holds. For detailed historical
information we refer the reader to the introduction and chronological publication list in \cite{Amir1986a}. A good overview of the criteria available in their respective times 
can be found in \cite{Day1962a,Istratescu1987a} and, of course, the very comprehensive \cite{Amir1986a}. 
The book \cite{Alsina2010a} gives a survey of the many characterisations in terms of norm derivatives.
Up to the present day this topic still remains very active and fruitful, 
so that, because of the sheer number of publications in this area, we restrain ourselves from trying to give a survey and instead refer the interested reader to the books mentioned above and the papers which are cited below as a starting point.\\

In what follows we give a, to the best of our knowledge, new characterisation of inner product spaces amongst normed vector spaces, which involves the maximal
circumradius of points in spheres. In a metric space $(X,d)$ the 
\emph{circumradius} $r(u,v,w)$ of three mutually distinct points $u,v,w\in X$ is defined to be the circumradius of the triangle constituted by the isometric embedding of these three
points in the Euclidean plane and can be written as
\begin{align*}
	r(u,v,w)=\frac{abc}{\sqrt{(a+b+c)(a+b-c)(a-b+c)(-a+b+c)}}=\frac{abc}{\sqrt{-D(u,v,w)}},
\end{align*}
where $a\vcentcolon=d(u,v)$, $b\vcentcolon=d(v,w)$, $c\vcentcolon=d(w,u)$ and incidentally $D(u,v,w)$ is the \emph{Cayley-Menger determinant}, 
see \cite[40, pp. 97--99, especially Ex. 3]{Blumenthal1970a}. If $D(u,v,w)=0$ we set $r(u,v,w)=\infty$. This quantity plays an important role in 
geometric curvature energies and ideal knots, where various curvature energies are defined in terms of $r(u,v,w)$. The most established of these energies are probably
the \emph{thickness} $\Delta$ and the \emph{integral $p$-Menger curvature} $\M_{p}$ of a metric space $(X,d)$, defined by
\begin{align*}
	\Delta[X]\vcentcolon=\inf_{\substack{u,v,w\in X\\u\not= v\not= w\not= u}}r(u,v,w),\quad
	\M_{p}(X)\vcentcolon=\int_{X}\int_{X}\int_{X}\frac{1}{r(u,v,w)^{p}}\dHM(u)\dHM(v)\dHM(w).
\end{align*}
Concerning these curvature energies we want to refer the reader to \cite{Gonzalez1999a} for the thickness, 
and \cite{Hahlomaa2008a,Strzelecki2010a,Scholtes2012a} and the references therein for the integral Menger curvature and ``intermediate energies''.
The notions above were also the cause why the author ``accidentally'' found the present characterisation of inner product spaces. We want to
mention the role of integral Menger curvature for $p=2$ in the solution of the Painlev\'e problem, i.e. to find geometric characterisations of removable sets for 
bounded analytic functions, see \cite{Pajot2002a} for a detailed presentation and references.\\

In \cite{Klee1960a} a characterisation of inner product spaces under normed vector spaces in terms of the \emph{$X$-radius}
\begin{align*}
	r_{X}(M)\vcentcolon= \inf\{r\in (0,\infty)\mid \exists x_{0}\in X: M\subset \overline B_{r}(x_{0})\}
\end{align*}
of a bounded subset $M$ of a normed vector space $X$ were given. If $\dim X\geq 3$ the existence of an inner product is equivalent to $r_{C}(C)=r_{X}(C)$ for every
bounded convex set $C\subset X$. Further results connected with the classic circumradius of points can be found in \cite{Alsina1996a}, where a characterisation of 
inner product spaces is given in terms of the radius of the inscribed circle and the circumradius of triangles being comprised by two independent vectors. 
More precisely they use two notions of each radius: one notion is defined only in terms of distances and the other additionally involves an inner product. 
Both these notions agree in inner product spaces. To make sense of the second notion for normed vector spaces, a generalized inner product on normed vector 
spaces, namely the norm derivative, is used. Now it can be shown that inner product spaces are exactly those normed vector spaces in which these two notions of radius agree. 
In case of the inscribed circle this was restricted to spaces of dimension larger than two and for the circumradius to spaces of dimension at least three with the
additional restriction of being strictly convex. Later on in \cite{Tomas2005a} one of these authors showed that a strictly convex normed vector space of dimension larger 
than three is an inner product space if and only if a circumcenter, in the sense of the intersection of the perpendicular bisectors of the edges, 
exists for certain types of triangles. In general there may be more than one point in this intersection, i.e. more than one circumcenter. Subsequently it was shown that a 
strictly convex normed vector space is an inner product space, if and only if for every collection of points on a sphere centred at the origin one of these circumcenters is the origin.\\

The paper is organized as follows: After looking into the behaviour of points not fulfilling the parallelogram law we shortly prove a well-known characterisation of inner
product spaces by Wilson involving the Euclidean four point property. We then briefly remark on a space of four points not embeddable in inner product spaces and
aspects of geometry in $(\R^{2},\norm{\cdot}_{\infty})$ in general. After that we give the characterisation of inner product spaces announced in the title.\\

\textbf{Acknowledgement}\\
The author whishes to thank his advisor Heiko von der Mosel for constant support and encouragement.

\section{A characterisation of inner product spaces in terms of the maximal circumradius of spheres}

\begin{definition}[(Inner product space, non inner product space)]
	Let $(X,\norm{\cdot})$ be a normed real vector space. We call $(X,\norm{\cdot})$ an \emph{inner product space}, or short \emph{ips}, if there exists an inner product 
	$\langle \cdot,\cdot\rangle$ on $X$, which induces the norm, i.e. $\norm{x}=\sqrt{\langle x,x \rangle}$ for all $x\in X$. If there exists no such inner
	product on $X$, we call $(X,\norm{\cdot})$ a \emph{non inner product space}, or short \emph{nips}. We refer to a sphere $\partial B_{r}(x_{0})$ as
	being \emph{degenerate} if there are points $u,v,w\in \partial B_{r}(x_{0})$, such that $r(u,v,w)>r$.
\end{definition}

We start by investigating what characterises points for which the parallelogram law does not hold.

\begin{lemma}[(Properties of points not fulfilling the parallelogram law)]\label{noparallelogramlawthentherearefourpointsnotembeddable}
	Let $(X,\norm{\cdot}_{X})$ be a normed vector space and $u,v\in X$ points which do not satisfy the parallelogram law, i.e.
	\begin{align}\label{noparallelogramlawforuandv}
		2\norm{u}_{X}^{2}+2\norm{v}_{X}^{2}\not= \norm{u+v}_{X}^{2}+ \norm{u-v}_{X}^{2}.
	\end{align}
	Then $(\{0,u,v,-v\},d_{X})$, where $d_{X}$ is the metric $d_{X}(a,b)=\norm{a-b}_{X}$, is not isometrically embeddable in any inner product space.
\end{lemma}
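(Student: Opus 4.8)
The plan is to argue by contradiction. Suppose there were an isometric embedding $\iota$ of $(\{0,u,v,-v\},d_{X})$ into some inner product space $(H,\langle\cdot,\cdot\rangle)$, and write $p\vcentcolon=\iota(0)$, $q\vcentcolon=\iota(u)$, $y\vcentcolon=\iota(v)$, $z\vcentcolon=\iota(-v)$. Then all six pairwise distances are prescribed by the $X$-norm; in particular, using homogeneity of $\norm{\cdot}_{X}$, we have $\norm{p-y}=\norm{p-z}=\norm{v}_{X}$ while $\norm{y-z}=\norm{2v}_{X}=2\norm{v}_{X}$, and similarly $\norm{p-q}=\norm{u}_{X}$, $\norm{q-y}=\norm{u-v}_{X}$ and $\norm{q-z}=\norm{u+v}_{X}$.

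The first step is to observe that $p$ is forced to be the midpoint of $y$ and $z$. Indeed, the triangle inequality gives $2\norm{v}_{X}=\norm{y-z}\leq\norm{y-p}+\norm{p-z}=2\norm{v}_{X}$, so equality holds throughout. Since $H$ is an inner product space and hence strictly convex, equality in the triangle inequality forces $y-p$ and $p-z$ to be non-negative multiples of one another; as they have the same norm $\norm{v}_{X}$ they must coincide, i.e. $p=\tfrac{1}{2}(y+z)$.

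Next I would translate the configuration so that $p=0$; then $z=-y$, and setting $a\vcentcolon=q$, $b\vcentcolon=y$ the remaining distance data reads $\norm{a}=\norm{u}_{X}$, $\norm{b}=\norm{v}_{X}$, $\norm{a-b}=\norm{u-v}_{X}$ and $\norm{a+b}=\norm{a-z}=\norm{u+v}_{X}$. Applying the parallelogram law, valid in the inner product space $H$, to the vectors $a$ and $b$ yields $\norm{u+v}_{X}^{2}+\norm{u-v}_{X}^{2}=\norm{a+b}^{2}+\norm{a-b}^{2}=2\norm{a}^{2}+2\norm{b}^{2}=2\norm{u}_{X}^{2}+2\norm{v}_{X}^{2}$, contradicting \eqref{noparallelogramlawforuandv}. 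Hence no such embedding can exist.

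The only mildly delicate point is the midpoint step, which relies on the strict convexity of inner product spaces so that equality in the triangle inequality implies collinearity of the relevant difference vectors; once this is in place, the rest is a direct substitution of the prescribed distances into the parallelogram identity, so I expect no further obstacle.
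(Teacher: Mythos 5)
Your proof is correct and follows essentially the same route as the paper: first show that the image of $0$ must be the midpoint of the images of $v$ and $-v$, then apply the parallelogram identity in the target inner product space to the translated images of $u$ and $v$ to contradict \eqref{noparallelogramlawforuandv}. The only difference is the justification of the midpoint step, where the paper applies the parallelogram law to $\phi(v)$ and $\phi(-v)$ (obtaining $\norm{\phi(v)+\phi(-v)}=0$ directly), whereas you invoke equality in the triangle inequality together with strict convexity; both are valid, the paper's version merely keeping the whole argument within the parallelogram identity.
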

\begin{proof}
	\textbf{Step 1}
		From (\ref{noparallelogramlawforuandv}) we know that $u\not\in\{0,v,-v\}$ and $v\not\in\{0,u,-u\}$. Assume that there exists an isometric embedding
		$\phi: (\{0,u,v,-v\},d_{X})\to (V,\norm{\cdot}_{V})$ into an inner product space.
		Without loss of generality, by translating by $-\phi(0)$ if necessary, we can assume $\phi(0)=0$. We now have
		\begin{align*}
			\norm{\phi(u)}_{V}=\norm{\phi(u)-\phi(0)}_{V}=d_{V}(\phi(u),\phi(0))=d_{X}(u,0)=\norm{u-0}_{X}=\norm{u}_{X}
		\end{align*}
		and by the same argument $\norm{\phi(v)}_{V}=\norm{v}_{X}=\norm{-v}_{X}=\norm{\phi(-v)}_{V}$. The parallelogram law for $v$ and $-v$ gives us
		\begin{align*}
			\MoveEqLeft 4\norm{v}_{X}^{2}=2\norm{v}_{X}^{2}+2\norm{v}_{X}^{2}=  2\norm{\phi(v)}_{V}^{2}+2\norm{\phi(-v)}_{V}^{2}\\
			&= \norm{\phi(v)+\phi(-v)}_{V}^{2}+ \norm{\phi(v)-\phi(-v)}_{V}^{2}\\
			&= \norm{\phi(v)+\phi(-v)}_{V}^{2}+ d_{V}(\phi(v),\phi(-v))^{2}\\
			&= \norm{\phi(v)+\phi(-v)}_{V}^{2}+ d_{X}(v,-v)^{2}\\
			&= \norm{\phi(v)+\phi(-v)}_{V}^{2}+ \norm{v-(-v)}_{X}^{2}\\
			&= \norm{\phi(v)+\phi(-v)}_{V}^{2}+ 4\norm{v}_{X}^{2},
		\end{align*}
		so that $\norm{\phi(v)+\phi(-v)}_{V}=0$ and consequently $\phi(-v)=-\phi(v)$.\\
	\textbf{Step 2}
		This gives us
		\begin{align*}
			\MoveEqLeft 2\norm{u}_{X}^{2}+2\norm{v}_{X}^{2}=2\norm{\phi(u)}_{V}^{2}+2\norm{\phi(v)}_{V}^{2}\\
			&=\norm{\phi(u)+\phi(v)}_{V}^{2}+\norm{\phi(u)-\phi(v)}_{V}^{2}\\
			&=\norm{\phi(u)-\phi(-v)}_{V}^{2}+\norm{\phi(u)-\phi(v)}_{V}^{2}\\
			&=\norm{u-(-v)}_{X}^{2}+\norm{u-v}_{X}^{2}
		\end{align*}
		which contradicts (\ref{noparallelogramlawforuandv}).
\end{proof}

We now have the means to prove en passant the most interesting implication, here (iv) implies (i), in a variation of a well-known theorem of 
Wilson \cite[12. Theorem]{Wilson1932a}, for the special case of normed vector spaces.

\begin{proposition}[(Characterisation of ips via Euclidean four point property)]\label{characterisationofinnerproductspaces}
	Let $(X,\norm{\cdot}_{X})$ be a normed vector space. Then the following are equivalent
	\begin{enumerate}
		\item
			$(X,\norm{\cdot}_{X})$ is an inner product space,
		\item
			there exists an inner product space $(V,\norm{\cdot}_{V})$ such that $(X,\norm{\cdot}_{X})$ is isometrically embeddable in $(V,\norm{\cdot}_{V})$,
		\item
			all sets $F=\{u,v,w,x\}\subset X$ are isometrically embeddable in $(\R^{3},\norm{\cdot}_{2})$,
		\item
			for all sets $F=\{u,v,w,x\}\subset X$ exists an inner product space $(V(F),\norm{\cdot}_{V(F)})$ in which $F$ is isometrically embeddable.
	\end{enumerate}
\end{proposition}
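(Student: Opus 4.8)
The plan is to prove the chain of implications (i) $\Rightarrow$ (ii) $\Rightarrow$ (iv) $\Rightarrow$ (iii) $\Rightarrow$ (i), since each arrow is either nearly trivial or reduces to a computation with the Cayley--Menger determinant. The implication (i) $\Rightarrow$ (ii) is immediate: an inner product space embeds isometrically into itself. The implication (ii) $\Rightarrow$ (iv) is also immediate, since if all of $X$ embeds into a fixed inner product space $V$, then in particular each four-point subset $F$ does, so we may take $V(F)=V$ for every $F$. Finally, (iii) $\Rightarrow$ (ii) by choosing $V=(\R^{3},\norm{\cdot}_{2})$ once one knows (iii) $\Rightarrow$ (i)... but the cleanest logical route is to close the loop via (iv) $\Rightarrow$ (i) $\Rightarrow$ (iii), so let me reorganise: prove (i) $\Rightarrow$ (iii) $\Rightarrow$ (iv) $\Rightarrow$ (i), with (ii) inserted between (i) and (iii) or handled as (i) $\Leftrightarrow$ (ii) separately. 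Concretely: (i) $\Rightarrow$ (ii) trivially; (ii) $\Rightarrow$ (iv) trivially; (iv) $\Rightarrow$ (i) is the main content; and (i) $\Rightarrow$ (iii) together with (iii) $\Rightarrow$ (iv) closes everything, where (iii) $\Rightarrow$ (iv) is trivial and (i) $\Rightarrow$ (iii) is a classical fact about finite subsets of Hilbert spaces.

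For the easy geometric fact (i) $\Rightarrow$ (iii): if $X$ is an inner product space, any four points $u,v,w,x$ lie in the at most $3$-dimensional affine subspace they span, which is isometric to a Euclidean space $\R^{k}$ with $k\leq 3$, hence embeds isometrically into $\R^{3}$. One spells this out by Gram--Schmidt on $v-u,\,w-u,\,x-u$, or simply by citing that every finite subset of an inner product space embeds isometrically into some $\R^{n}$.

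The heart of the matter is (iv) $\Rightarrow$ (i), which I would prove by contraposition using Lemma~\ref{noparallelogramlawthentherearefourpointsnotembeddable}. Suppose $(X,\norm{\cdot}_{X})$ is not an inner product space. By the Jordan--von Neumann theorem the parallelogram law must fail for some pair $u,v\in X$, i.e. (\ref{noparallelogramlawforuandv}) holds. Consider the four-point set $F=\{0,u,v,-v\}\subset X$ (noting $0\in X$ always). Lemma~\ref{noparallelogramlawthentherearefourpointsnotembeddable} asserts precisely that $(F,d_{X})$ is not isometrically embeddable into any inner product space, which directly contradicts (iv) applied to this particular $F$. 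Hence (iv) forces $X$ to satisfy the parallelogram law everywhere, so $X$ is an inner product space. The only subtlety to flag is that one must invoke Jordan--von Neumann to pass from "some four points fail to embed" back to "the parallelogram law holds globally"; this is standard, but should be cited as \cite{Jordan1935a}. The remaining implication (iii) $\Rightarrow$ (iv) is trivial (take $V(F)=(\R^{3},\norm{\cdot}_{2})$), which closes the cycle and completes the proof.
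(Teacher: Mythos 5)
Your proof is correct, and its core coincides with the paper's: the one substantive implication, (iv) $\Rightarrow$ (i), is handled in both cases by contraposition via Lemma \ref{noparallelogramlawthentherearefourpointsnotembeddable} together with the Jordan--von Neumann theorem. Where you differ is in the routing of the easy implications. The paper proves the cycle (i) $\Rightarrow$ (ii) $\Rightarrow$ (iii) $\Rightarrow$ (iv) $\Rightarrow$ (i), and for (ii) $\Rightarrow$ (iii) it passes to the completion of $V$ and cites Wilson's Euclidean four-point property of inner product spaces; you instead prove (i) $\Rightarrow$ (ii) $\Rightarrow$ (iv) $\Rightarrow$ (i) and close the loop with (i) $\Rightarrow$ (iii) $\Rightarrow$ (iv), establishing (i) $\Rightarrow$ (iii) by the elementary observation that four points of an inner product space lie in an affine subspace of dimension at most $3$, which by Gram--Schmidt is isometric to a Euclidean $\R^{k}$, $k\leq 3$. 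Your implication graph is strongly connected, so the equivalence does follow. What your variant buys is self-containedness: no appeal to Wilson's theorem and no completion argument is needed, since the affine-span argument works in any (not necessarily complete) inner product space. What the paper's version buys is the explicit link to the classical Euclidean four-point property, which is part of the point of the proposition; also note that your (i) $\Rightarrow$ (iii) argument, applied inside $V$, would equally prove the paper's (ii) $\Rightarrow$ (iii) directly. One small presentational remark: the opening of your proposal wanders through two competing orderings of the implications before settling on one; in a final write-up you should state the chosen implication scheme once and verify explicitly that it yields all equivalences.
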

\begin{proof}
	Clearly (i) implies (ii) and (iii) implies (iv). Now for (ii) implies (iii). By (ii) a given set $\{u,v,w,x\}\subset X$ is isometrically mapped to four points in 
	the inner product space $V$ and therefore also to four points in the completion of this inner product space, which again is an inner product space. 
	This means (iii) is simply a consequence of the observation that the Euclidean four-point property of \cite{Wilson1932a}, see also \cite[(e4pp-0), p.116]{Day1962a},
	holds for inner product spaces.
	The remaining direction is the contrapositive of Lemma \ref{noparallelogramlawthentherearefourpointsnotembeddable}, 
	since a normed vector space is an inner product space	if and only if the parallelogram law holds for all $u,v\in X$.
\end{proof}

It is kind of surprising that although any three points in a pseudometric space can be isometrically embedded in the plane, there are four points in a Banach space,
in the example we will present even the center and three points on the boundary of a ball, which cannot be isometrically embedded in any inner product space. 
A similar space and the nonexistence of isometric embeddings in Euclidean spaces can be found in \cite{Robinson2006a}. We refer the reader to \cite{Yang1984a} for 
more complicated examples and conditions on sets which cannot be isometrically embedded in any Hilbert space.

\begin{example}[(Four points not isometrically embeddable in ips)]\label{fourpointsnotembeddable}
	Consider the Banach space $(\R^{2},\norm{\cdot}_{\infty})$ and 
	\begin{align*}
		\theta\vcentcolon=(0,0), u\vcentcolon=(0,1), v\vcentcolon=(1,0), -v=(-1,0)\in\R^{2}.
	\end{align*}
	Then according to Lemma \ref{embeddingthreepointsonasphereandthecenter} for any inner product space $(V,\langle \cdot,\cdot\rangle)$ there exists no, 
	possibly nonlinear, isometry $\phi: (\{\theta,u,v,-v\},\norm{\cdot}_{\infty})\to (V,\langle \cdot,\cdot\rangle)$, since one can easily check that $r(u,v,-v)=\infty$.
	Alternatively one can verify that the parallelogram law does not hold for $u$ and $v$ and then use Lemma \ref{noparallelogramlawthentherearefourpointsnotembeddable}.
\end{example}

In Lemma \ref{circumradiusofspheresinnorminfty} in the appendix we continue this example and show that we find points of arbitrary circumradius on any sphere 
in $(\R^{2},\norm{\cdot}_{\infty})$.

\begin{lemma}[(Embedding of three points on a sphere and their center)]\label{embeddingthreepointsonasphereandthecenter}
	Let $(X,d)$ be a metric space and $x_{0},u,v,w\in X$ mutually distinct points, such that $d(x_{0},x)=r$ and $r>0$ for all $x\in\{u,v,w\}$. Then $\{x_{0},u,v,w\}$ 
	is isometrically embeddable in $3$-dimensional Euclidean space -- which incidentally is the same as being isometrically embeddable in any inner product space --
	if and only if $r(u,v,w)\leq r$.
\end{lemma}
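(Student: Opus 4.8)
The plan is to prove both directions of the equivalence by relating the intrinsic Cayley--Menger data of the four points $\{x_0,u,v,w\}$ to the existence of a Euclidean realisation. First I would record the obvious reduction: by Proposition \ref{characterisationofinnerproductspaces}, isometric embeddability of a four-point set in \emph{some} inner product space is the same as embeddability in $(\R^3,\norm{\cdot}_2)$, so it suffices to decide when $\{x_0,u,v,w\}$ embeds in $\R^3$. For the easy direction, suppose such an embedding exists; after translating, identify $x_0$ with the origin, so $u,v,w$ land on the Euclidean sphere of radius $r$ about $0$. Then the Euclidean circumradius of the triangle $\phi(u)\phi(v)\phi(w)$ equals $r(u,v,w)$ (the circumradius is an isometry invariant of three points, and it is computed in the plane through those three points). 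But three points on a sphere of radius $r$ span a circle of radius at most $r$ — the circle in which their affine plane meets the sphere has radius $\sqrt{r^2-h^2}\le r$, where $h$ is the distance from $0$ to that plane — and the circumcircle of the triangle is exactly that circle. Hence $r(u,v,w)\le r$, and equality forces the plane to pass through $0$.

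For the converse, assume $r(u,v,w)\le r$ with $a=d(u,v),b=d(v,w),c=d(w,u)$ all positive and $D(u,v,w)<0$ (the degenerate case $r(u,v,w)=\infty$ is excluded by the hypothesis, so $-D(u,v,w)>0$). First embed $u,v,w$ in the plane $\R^2\times\{0\}\subset\R^3$ isometrically as a genuine triangle $T$ — this is the classical three-point embedding, possible exactly because the Cayley--Menger determinant has the right sign. Let $\rho:=r(u,v,w)\le r$ be its circumradius and let $c_0\in\R^2\times\{0\}$ be its circumcenter, so $\abs{c_0-\phi(x)}=\rho$ for $x\in\{u,v,w\}$. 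Now I would place the image of $x_0$ at $\phi(x_0):=c_0+(0,0,t)$ for the unique $t\ge 0$ with $\rho^2+t^2=r^2$, i.e. $t=\sqrt{r^2-\rho^2}$, which is well-defined precisely because $\rho\le r$. Then for each $x\in\{u,v,w\}$,
\begin{align*}
	\abs{\phi(x_0)-\phi(x)}^2=\abs{c_0-\phi(x)}^2+t^2=\rho^2+(r^2-\rho^2)=r^2,
\end{align*}
so $d(\phi(x_0),\phi(x))=r=d(x_0,x)$, and all pairwise distances among the four images match those in $X$. Thus $\phi$ is the desired isometric embedding into $\R^3$, hence into every inner product space.

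The main point requiring care — and the only genuine obstacle — is the geometric lemma underlying the forward direction: that the circumcircle of a triangle inscribed in a Euclidean $2$-sphere of radius $r$ has radius at most $r$, with equality iff the triangle's plane is a great-circle plane. I would prove this cleanly by the Pythagorean decomposition just used in reverse: if $p_1,p_2,p_3$ lie on the sphere of radius $r$ centred at $0$ and $H$ is their affine plane with orthogonal projection $q$ of $0$ onto $H$, then $\abs{q-p_i}^2=\abs{0-p_i}^2-\dist(0,H)^2=r^2-\dist(0,H)^2$ is the same for $i=1,2,3$, so $q$ is equidistant from the three points within $H$, hence is their circumcenter and the common value $\sqrt{r^2-\dist(0,H)^2}$ is the circumradius, which is $\le r$ with equality iff $0\in H$. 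One should also dispatch the trivial observation that $x_0,u,v,w$ being mutually distinct with $d(x_0,\cdot)=r>0$ already forces $a,b,c>0$, so the circumradius $r(u,v,w)$ is a well-defined element of $(0,\infty]$ and the statement "$r(u,v,w)\le r$" is meaningful; combined with the remark in Example \ref{fourpointsnotembeddable} this also explains why in $(\R^2,\norm{\cdot}_\infty)$ the configuration $\{\theta,u,v,-v\}$ fails to embed. No delicate estimates beyond this are needed; the rest is bookkeeping with the two-sided isometry identity and an appeal to Proposition \ref{characterisationofinnerproductspaces} to pass between "$\R^3$" and "arbitrary inner product space".
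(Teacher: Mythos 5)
Your proof follows essentially the same route as the paper's: the ``if'' direction is the identical construction (realise $u,v,w$ as a triangle in a plane and lift the image of $x_{0}$ to height $\sqrt{r^{2}-r(u,v,w)^{2}}$ over the circumcentre), and your ``only if'' direction, via the orthogonal projection of the centre onto the plane of the triangle, is a cleaner, fully written-out version of what the paper's Case 2 only sketches as the distance being ``too short to reach the $z$-axis''. The appeal to Proposition \ref{characterisationofinnerproductspaces} to pass between $\R^{3}$ and arbitrary inner product spaces is legitimate if you apply it to the inner product space $V$ containing the four image points (its statement is about four-point subsets of a normed space); equivalently you may quote Wilson's Euclidean four-point property directly, which is what the parenthetical remark in the Lemma rests on.

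The one genuine omission is the degenerate case in the ``only if'' direction, which the paper treats separately as Case 3. You correctly note that $r(u,v,w)=\infty$ is excluded by hypothesis in the converse direction, but in the forward direction it is not excluded a priori: you assume the three image points span an affine plane and possess a circumcircle, which silently presupposes $r(u,v,w)<\infty$. If $r(u,v,w)=\infty$ the images of $u,v,w$ under any embedding would be three distinct collinear points on a Euclidean sphere of radius $r$, and you must show this is impossible in order to conclude $r(u,v,w)\leq r$ in all cases. The fix is one sentence: a line meets a sphere in at most two points (Lemma \ref{intersectionofsphereandline}), or, in the spirit of your own computation, the foot of the perpendicular from the centre to the affine span of the three points would be equidistant from three distinct collinear points, which cannot happen. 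With that addition your argument is complete and matches the paper's.
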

\begin{proof}
	Let $r(u,v,w)<\infty$, or else see Case 3.
	Without loss of generality we start by embedding the three points $\{u,v,w\}$ isometrically in the $xy$ plane in $\R^{3}$ with a mapping $\phi$, such that
	the the center of the circumcircle of $\phi(x),\phi(y),\phi(z)$ is the origin; and have to investigate wether we can extend $\phi$ isometrically to $x_{0}$, i.e.
	$\phi(u),\phi(v),\phi(w)\in \partial B_{r}(\phi(x_{0}))$.\\	
	\textbf{Case 1}
		If $r(u,v,w)\leq r$ we can place $\phi(x_{0})=(0,0,\sqrt{r^{2}-r(u,v,w)^{2}})$ in the appropriate distance along the $z$-axis.\\
	\textbf{Case 2}
		If $r<r(u,v,w)$ the distance $r$, where $\phi(x_{0})$ would have to be placed, if we expect our embedding to be isometrically, is too short to reach the $z$-axis.\\
	\textbf{Case 3}
		If $r(u,v,w)=\infty$ the mutually distinct points $\phi(u),\phi(v),\phi(w)$ lie on a straight line. In Euclidean space a line can meet a sphere at most two 
		times, see Lemma \ref{intersectionofsphereandline}, such that there is no $a\in \R^{3}$ with $\phi(u),\phi(v),\phi(w)\in \partial B_{r}(a)$.
\end{proof}

\begin{lemma}[(All spheres in non inner product spaces are degenerate)]\label{allnonipsspheresaredegenerate}
	Let $(X,\norm{\cdot})$ be a normed vector space, which is a non inner product space. Then for all $x_{0}\in X$ and all $r>0$ there exist mutually distinct 
	$u,v\in \partial B_{r}(x_{0})$ such that $r(u,v,-v)>r$.
\end{lemma}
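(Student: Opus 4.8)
The plan is to reduce to the unit sphere centred at the origin and then combine the two preceding lemmas. Since $r(u,v,w)$ depends only on the mutual distances $d_{X}(u,v),d_{X}(v,w),d_{X}(w,u)$, which are translation-invariant and are multiplied by $r$ under $u,v,w\mapsto ru,rv,rw$, it suffices to prove the statement for $x_{0}=0$ and $r=1$; whether $(X,\norm{\cdot})$ is an inner product space is in any case independent of the choice of origin. (For $x_{0}\neq 0$ the point ``$-v$'' should be read as the reflection $2x_{0}-v$ of $v$ in the centre, so that $u,v,2x_{0}-v$ all lie on $\partial B_{r}(x_{0})$; translating the centre to $0$ turns it into $-v$.) Thus I only have to produce mutually distinct $u,v\in\partial B_{1}(0)$ with $r(u,v,-v)>1$.

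I would now exploit that $(X,\norm{\cdot})$ is a non inner product space. By the Jordan--von Neumann theorem the parallelogram law fails for some pair of vectors; such a pair is automatically linearly independent, since the parallelogram law holds trivially for collinear vectors, so the failure already happens inside a two-dimensional subspace. The essential step -- and the one I expect to require the most care -- is that the parallelogram law must then fail already for a pair of unit vectors (equivalently, for a pair of equinormal vectors). This is the classical refinement of the Jordan--von Neumann criterion to the effect that a normed space whose norm obeys the parallelogram law for all pairs of unit vectors is an inner product space, see \cite{Amir1986a}; its proof reduces to the plane. The reason one cannot simply rescale a failing pair $a,b$ into an equinormal one is that $t\mapsto\norm{a+tb}^{2}+\norm{a-tb}^{2}-2\norm{a}^{2}-2t^{2}\norm{b}^{2}$ is only continuous, not polynomial, in $t$, so it may vanish precisely at the rescaling $t=\norm{a}/\norm{b}$ that equalises the two norms even though it is nonzero at $t=1$.

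Fix such a pair $u,v\in\partial B_{1}(0)$. Since $u$ and $v$ are linearly independent, $0,u,v,-v$ are four mutually distinct points with $\norm{u}=\norm{v}=\norm{-v}=1$. As the parallelogram law fails for $u,v$, Lemma \ref{noparallelogramlawthentherearefourpointsnotembeddable} shows that $(\{0,u,v,-v\},d_{X})$ is not isometrically embeddable in any inner product space, in particular not in $(\R^{3},\norm{\cdot}_{2})$. On the other hand $0$ is at distance $1$ from each of $u,v,-v$, so Lemma \ref{embeddingthreepointsonasphereandthecenter}, applied with centre $0$ and radius $1$, asserts that $\{0,u,v,-v\}$ embeds isometrically in $\R^{3}$ if and only if $r(u,v,-v)\leq 1$. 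The two facts together force $r(u,v,-v)>1$, which is what we needed on $\partial B_{1}(0)$.

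It remains to undo the reductions: scaling by $r$ gives $ru,rv\in\partial B_{r}(0)$ with $r(ru,rv,-rv)=r\,r(u,v,-v)>r$, and translating by $x_{0}$ gives mutually distinct $x_{0}+ru,x_{0}+rv\in\partial B_{r}(x_{0})$ together with the reflected point $x_{0}-rv$, still of circumradius $>r$; for $x_{0}=0$ this reads literally as $r(u,v,-v)>r$ for suitable $u,v\in\partial B_{r}(0)$. The only genuinely non-routine ingredient is the equinormal refinement highlighted above; everything else is bookkeeping with Lemmas \ref{noparallelogramlawthentherearefourpointsnotembeddable} and \ref{embeddingthreepointsonasphereandthecenter}.
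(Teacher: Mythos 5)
Your proposal is correct and follows essentially the same route as the paper: reduce to the sphere about the origin, invoke the equinormal refinement of the Jordan--von Neumann criterion (the paper cites this as \cite[Theorem 2.1]{Day1947a}, where you cite \cite{Amir1986a}) to get unit vectors failing the parallelogram law, and then combine Lemma \ref{noparallelogramlawthentherearefourpointsnotembeddable} with Lemma \ref{embeddingthreepointsonasphereandthecenter} to force $r(u,v,-v)>r$. Your explicit handling of the translation/scaling bookkeeping (reading ``$-v$'' as the reflection $2x_{0}-v$ for $x_{0}\neq 0$) is a careful touch the paper glosses over, but the substance is identical.
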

\begin{proof}
	Without loss of generality we can assume $x_{0}=0$, because we can always translate by $-x_{0}$.
	By scaling the two vectors in the parallelogram law with $r$, we know from \cite[Theorem 2.1]{Day1947a} that for each $r>0$ exist vectors $u,v\in \partial B_{r}(0)$, 
	such that
	\begin{align*}
		\norm{u+v}^{2}+\norm{u-v}^{2}\not=2\norm{u}^{2}+2\norm{v}^{2}=4r^{2},
	\end{align*}
	so that by Lemma \ref{noparallelogramlawthentherearefourpointsnotembeddable} the points $\{0,u,v,-v\}$ are not isometrically embeddable in any inner product space.
	Considering Lemma \ref{embeddingthreepointsonasphereandthecenter} we have $r<r(u,v,-v)$.
\end{proof}

\begin{lemma}[(Supremum is attained)]\label{supremumofcircumradiusisattained}
	Let $(V,\norm{\cdot})$ be an inner product space, $\dim V= 2$. Then for $x_{0}\in V$, $r>0$ and all pairwise distinct
	$u,v,w\in \partial B_{r}(x_{0})$ we have $r(u,v,w)=r$.
\end{lemma}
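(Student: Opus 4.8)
The plan is to exploit that a two-dimensional inner product space is, up to a linear isometry, the Euclidean plane, so that the three points $u,v,w$ really do lie on a genuine Euclidean circle of radius $r$, and then to invoke the uniqueness of the circumcircle of a triangle.

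First I would reduce to the model case. Since $\dim V=2$ and the norm on $V$ is induced by an inner product, there is, after translating by $-x_{0}$, a linear isometry from $V$ onto $(\R^{2},\norm{\cdot}_{2})$ taking $x_{0}$ to $0$; so we may assume $x_{0}=0$ and work entirely in $(\R^{2},\norm{\cdot}_{2})$, where $u,v,w\in\partial B_{r}(0)$ are pairwise distinct. As the quantity $r(u,v,w)$ is a function of the three mutual distances $\norm{u-v}_{2}$, $\norm{v-w}_{2}$, $\norm{w-u}_{2}$ alone, and an isometry preserves these, it suffices to show that the Euclidean triangle with vertices $u,v,w$ has circumradius $r$.

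Next I would rule out the degenerate case $r(u,v,w)=\infty$: by Lemma \ref{intersectionofsphereandline} a straight line meets $\partial B_{r}(0)$ in at most two points, so the three distinct points $u,v,w$ cannot be collinear, whence $D(u,v,w)\neq 0$ and $r(u,v,w)<\infty$. Then three distinct non-collinear points of the Euclidean plane lie on exactly one circle, namely their circumcircle, which by definition has radius $r(u,v,w)$. Since $\partial B_{r}(0)$ is a circle passing through $u$, $v$ and $w$, uniqueness forces the circumcircle to coincide with $\partial B_{r}(0)$, and therefore $r(u,v,w)=r$. (Alternatively, applying Lemma \ref{embeddingthreepointsonasphereandthecenter} to $\{0,u,v,w\}\subset\R^{2}\subset\R^{3}$ already gives $r(u,v,w)\leq r$, and the reverse inequality again follows from the uniqueness of the circle through these three points.)

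There is no serious obstacle here; the one point deserving a line of justification is the initial reduction, i.e. that the combinatorial circumradius $r(u,v,w)$ — defined via an abstract isometric embedding of three abstract points into the plane — agrees with the honest circumradius of the triangle $uvw$ once $u,v,w$ already sit inside the Euclidean plane $V$, and this is immediate because $r(u,v,w)$ depends only on the pairwise distances. The substance of the statement is simply the elementary fact that a triangle has a unique circumcircle.
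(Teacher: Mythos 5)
Your proof is correct and follows essentially the same route as the paper: reduce via translation and the orthonormal-basis isometry to points on a Euclidean circle of radius $r$, and use that $r(u,v,w)$ depends only on the mutual distances. You merely spell out the final step (non-collinearity via Lemma \ref{intersectionofsphereandline} and uniqueness of the circumcircle) which the paper asserts in one line.
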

\begin{proof}
	Since $\partial B_{r}(x_{0})=x_{0}+\partial B_{r}(0)$ and $r(u,v,w)=r(u+x_{0},v+x_{0},w+x_{0})$ we can safely assume $x_{0}=0$. 
	Let $P\subset V$ be a 
	two-dimensional subspace. Then $P$ is a two-dimensional inner product space, which is isometrically isomorphic to the 
	Euclidean plane $(\R^{2},\norm{\cdot}_{2})$, as
	can easily been seen by choosing an orthonormal base $(b_{1},b_{2})$ of $P$ and noting that 
	$\norm{\alpha b_{1}+\beta b_{2}}=\sqrt{\alpha^{2}+\beta^{2}}=\norm{(\alpha,\beta)}_{2}$, which gives rise to an isometric isomorphism $\phi$.
	Since the circumradius only depends on the distances of points we have $r(x,y,z)=r(\phi(x),\phi(y),\phi(z))=r$.
\end{proof}

We should remark that the condition $\dim X\geq 2$, we will impose on our space, is not really a restriction, as every one-dimensional 
space is an inner product space, because the norm is essentially a positive constant times the absolute value.

\begin{definition}[(The quantity $S(M)$)]
	To shorten notation we denote
	\begin{align*}
		S(M)\vcentcolon=\sup_{\substack{u,v,w\in M\\u\not= v\not= w\not= u}}r(u,v,w)\in (0,\infty]
	\end{align*}
	for a subset $M$ of a metric space $(X,d)$.
\end{definition}

\begin{theorem}[(Characterisation of ips via circumradius)]\label{chracterisationofipsviacircumradius}
	Let $(X,\norm{\cdot})$ be a normed vector space, $\dim X\geq 2$. The following items are equivalent
	\begin{enumerate}
		\item
			$(X,\norm{\cdot})$ is an inner product space,
		\item
			for all $x_{0}\in X$ and all $r>0$ we have $S(\partial B_{r}(x_{0}))=r$,
		\item
			there exists a $x_{0}\in X$ and a $r>0$, such that $S(\partial B_{r}(x_{0}))=r$.
	\end{enumerate}
\end{theorem}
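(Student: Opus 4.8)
The plan is to prove the cycle of implications (i) $\Rightarrow$ (ii) $\Rightarrow$ (iii) $\Rightarrow$ (i), exploiting the lemmas already assembled in the excerpt so that almost no fresh computation is needed.

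For (i) $\Rightarrow$ (ii): assume $(X,\norm{\cdot})$ is an inner product space and fix $x_{0}\in X$ and $r>0$. Given pairwise distinct $u,v,w\in\partial B_{r}(x_{0})$, the three points together with their common center $x_{0}$ lie in a subspace of dimension at most $3$ (in fact the affine span of $u,v,w,x_0$), which is itself an inner product space; applying Lemma~\ref{embeddingthreepointsonasphereandthecenter} to the metric space $X$ with the four mutually distinct points $x_{0},u,v,w$ — noting that these four points \emph{are} isometrically embeddable in $3$-dimensional Euclidean space since $X$ is already an inner product space — forces $r(u,v,w)\leq r$. Since $\dim X\geq 2$, there genuinely exist triples of pairwise distinct points on the sphere, and by Lemma~\ref{supremumofcircumradiusisattained} applied inside any two-dimensional subspace through $x_0$ we even get a triple with $r(u,v,w)=r$ exactly. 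Hence the supremum $S(\partial B_{r}(x_{0}))$ is both bounded above by $r$ and attained, so it equals $r$. The implication (ii) $\Rightarrow$ (iii) is immediate because $\dim X\geq 2$ guarantees that at least one sphere (indeed every sphere) exists and contains three pairwise distinct points, so statement (ii) is not vacuous and specializes to (iii).

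The substantive direction is (iii) $\Rightarrow$ (i), and I would argue it by contraposition: suppose $(X,\norm{\cdot})$ is \emph{not} an inner product space. Then Lemma~\ref{allnonipsspheresaredegenerate} says that for \emph{every} $x_{0}\in X$ and \emph{every} $r>0$ there are mutually distinct $u,v\in\partial B_{r}(x_{0})$ with $r(u,v,-v+2x_{0})>r$ (after translating $x_0$ to the origin, this is the statement $r(u,v,-v)>r$ in the lemma; one translates back, and note $-v$ reflected through $x_0$ still lies on the sphere). In particular, for the specific $x_{0}$ and $r$ named in (iii) we obtain a triple of pairwise distinct points on $\partial B_{r}(x_{0})$ whose circumradius strictly exceeds $r$, so $S(\partial B_{r}(x_{0}))>r$, contradicting (iii). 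This closes the cycle.

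The main obstacle is essentially bookkeeping rather than mathematics: one must make sure the four points $x_{0},u,v,w$ are genuinely \emph{mutually} distinct before invoking Lemma~\ref{embeddingthreepointsonasphereandthecenter} (they are, since $u,v,w\in\partial B_r(x_0)$ are assumed pairwise distinct and none equals the center $x_0$ as $r>0$), and that the reflection point $2x_{0}-v$ used in the non-ips direction indeed lies on $\partial B_{r}(x_{0})$ and is distinct from $u$ and $v$ — both of which follow from $\norm{(2x_0-v)-x_0}=\norm{x_0-v}=r$ and the explicit choice in Lemma~\ref{allnonipsspheresaredegenerate} (in the translated picture $v\neq -v$ because $v\neq 0$, and $u\neq\pm v$ comes from the parallelogram law failing). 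Once these distinctness checks are in place, every step is a direct citation of a previously established lemma, so the proof is short.
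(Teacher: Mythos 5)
Your proposal is correct and follows essentially the same route as the paper: $S(\partial B_{r}(x_{0}))\leq r$ in an ips via Lemma~\ref{embeddingthreepointsonasphereandthecenter}, attainment via Lemma~\ref{supremumofcircumradiusisattained} in a two-dimensional slice, the trivial step (ii)$\Rightarrow$(iii), and (iii)$\Rightarrow$(i) as the contrapositive of Lemma~\ref{allnonipsspheresaredegenerate}. Your extra bookkeeping (translating $-v$ to $2x_{0}-v$ and the distinctness checks) is a welcome, slightly more careful rendering of what the paper leaves implicit.
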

\begin{proof}
	We start by proving (i)$\Rightarrow$(ii). If $(X,\norm{\cdot})$ is an inner product space and we take $x_{0},r,u,v,w$ as in (ii) then $S(\partial B_{r}(x_{0}))\leq r$
	by Lemma \ref{embeddingthreepointsonasphereandthecenter} and ``$=$'' by Lemma \ref{supremumofcircumradiusisattained}. The implication (ii)$\Rightarrow$(iii)
	is clearly true. Now assume (iii), i.e. there exists a sphere, which is not degenerate. Then by the contrapositive of Lemma \ref{allnonipsspheresaredegenerate}
	this implies (i).
\end{proof}

Taking the risk of repeating ourselves, we want to spell out, how one could go about in finding out if a normed space is an inner product space.

\begin{corollary}[(Computing a single $S(\partial B_{r}(x_{0}))$ decides about ips or nips)]
	Let $(X,\norm{\cdot})$ be a normed vector space, $\dim X\geq 2$ and $x_{0}\in X$, $r>0$. Then $S(\partial B_{r}(x_{0}))\geq r$ and
	\begin{itemize}
		\item
			if $r=S(\partial B_{r}(x_{0}))$ then $(X,\norm{\cdot})$ is an inner product space,
		\item
			if $r<S(\partial B_{r}(x_{0}))$ then $(X,\norm{\cdot})$ is a non inner product space.
	\end{itemize}
\end{corollary}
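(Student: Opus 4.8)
The plan is to derive every assertion of the corollary directly from Theorem \ref{chracterisationofipsviacircumradius}, using the trivial dichotomy that $(X,\norm{\cdot})$ is either an inner product space or a non inner product space. First I would prove the inequality $S(\partial B_{r}(x_{0}))\geq r$ by a short case split. If $(X,\norm{\cdot})$ is an inner product space, then the implication (i)$\Rightarrow$(ii) of Theorem \ref{chracterisationofipsviacircumradius} gives $S(\partial B_{r}(x_{0}))=r$. If instead $(X,\norm{\cdot})$ is a non inner product space, then Lemma \ref{allnonipsspheresaredegenerate} supplies mutually distinct $u,v\in\partial B_{r}(x_{0})$ with $r(u,v,-v)>r$, so that $S(\partial B_{r}(x_{0}))\geq r(u,v,-v)>r$. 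In either case $S(\partial B_{r}(x_{0}))\geq r$, and since $\dim X\geq 2$ the sphere does contain three pairwise distinct points, so the supremum defining $S$ is taken over a non-empty set.

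For the two bullet points I would argue as follows. If $r=S(\partial B_{r}(x_{0}))$, then the pair $(x_{0},r)$ witnesses condition (iii) of Theorem \ref{chracterisationofipsviacircumradius}, and (iii)$\Rightarrow$(i) yields that $(X,\norm{\cdot})$ is an inner product space. Conversely, if $r<S(\partial B_{r}(x_{0}))$, then $(X,\norm{\cdot})$ cannot be an inner product space, for otherwise (i)$\Rightarrow$(ii) of Theorem \ref{chracterisationofipsviacircumradius} would force $S(\partial B_{r}(x_{0}))=r$, contradicting the strict inequality; hence $(X,\norm{\cdot})$ is a non inner product space. Because $S(\partial B_{r}(x_{0}))\geq r$ always holds, exactly one of the alternatives $r=S(\partial B_{r}(x_{0}))$ and $r<S(\partial B_{r}(x_{0}))$ occurs, so the two bullet points together cover every case.

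I do not expect any real obstacle here: all the substance is already contained in Theorem \ref{chracterisationofipsviacircumradius} and in Lemmas \ref{allnonipsspheresaredegenerate}, \ref{embeddingthreepointsonasphereandthecenter} and \ref{supremumofcircumradiusisattained}, and the corollary is essentially a restatement of the theorem from a ``computational'' viewpoint. The only point worth a moment's care is that the lower bound $S(\partial B_{r}(x_{0}))\geq r$ is not immediate from the definition of $S$ alone but genuinely relies on the case distinction above — in particular on Lemma \ref{allnonipsspheresaredegenerate} to rule out that a sphere in a non inner product space might fail to carry a triangle of circumradius at least $r$.
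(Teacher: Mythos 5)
Your proposal is correct and follows the same route as the paper, which simply declares the corollary a direct consequence of Theorem \ref{chracterisationofipsviacircumradius}; your case split via the ips/nips dichotomy is exactly the intended unpacking. You also rightly observe that the lower bound $S(\partial B_{r}(x_{0}))\geq r$ needs Lemma \ref{allnonipsspheresaredegenerate} (to upgrade $S\not=r$ to $S>r$ in the nips case), a detail the paper's one-line proof leaves implicit.
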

\begin{proof}
	Direct consequence of Theorem \ref{chracterisationofipsviacircumradius}.
\end{proof}

\begin{appendix}

\section{Appendix}

\begin{lemma}[(Intersection of sphere and straight line in inner product spaces)]\label{intersectionofsphereandline}
	In an inner product space $(V,\norm{\cdot})$ a sphere and a straight line can coincide in at most two points.
\end{lemma}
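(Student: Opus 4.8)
The plan is to reduce the statement to the elementary fact that a quadratic polynomial in one real variable with nonzero leading coefficient has at most two real roots. Write the straight line as the affine line $L=\{a+tb\mid t\in\R\}$ determined by two distinct points, where $b\neq 0$, and write the sphere as $\partial B_{\rho}(c)=\{x\in V\mid \langle x-c,x-c\rangle=\rho^{2}\}$ for some $c\in V$ and $\rho>0$, where $\langle\cdot,\cdot\rangle$ denotes the inner product inducing $\norm{\cdot}$.

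First I would substitute the parametrisation of $L$ into the defining equation of the sphere. Setting $w\vcentcolon=a-c$, a point $a+tb$ lies on $\partial B_{\rho}(c)$ precisely when $\langle w+tb,w+tb\rangle=\rho^{2}$, which by bilinearity and symmetry of the inner product is equivalent to
\[
	\norm{b}^{2}t^{2}+2\langle w,b\rangle\,t+\bigl(\norm{w}^{2}-\rho^{2}\bigr)=0.
\]
Since $b\neq 0$ we have $\norm{b}^{2}=\langle b,b\rangle>0$, so this is a genuine quadratic equation in the real unknown $t$ and hence has at most two solutions. As the map $t\mapsto a+tb$ is injective, distinct points of $L\cap\partial B_{\rho}(c)$ correspond to distinct values of $t$, so $L$ meets $\partial B_{\rho}(c)$ in at most two points.

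I do not expect any real obstacle here; the argument is completely routine. The only point worth a word is the interpretation of ``straight line'', which I would take to be an affine line $\{a+tb\mid t\in\R\}$ with $b\neq 0$ — this is exactly the object that occurs in the application of this lemma in the proof of Lemma~\ref{embeddingthreepointsonasphereandthecenter}, where the collinear images $\phi(u),\phi(v),\phi(w)$ lie on such a line in $\R^{3}$.
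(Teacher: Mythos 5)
Your argument is correct and is essentially the paper's own proof: parametrise the line, substitute into the sphere equation, and observe that the resulting quadratic in $t$ has at most two real roots (the paper normalises the direction vector to $\norm{v}=1$, whereas you keep a general $b\neq 0$ and note the leading coefficient $\norm{b}^{2}>0$, plus the injectivity of $t\mapsto a+tb$ — minor extra care, same route).
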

\begin{proof}
	Let $x_{0},a,v\in V$, $\norm{v}=1$, $r>0$. Consider the sphere $S\vcentcolon=\partial B_{r}(x_{0})$ and the straight line $L\vcentcolon=a+\R v$. We have $y\in S\cap L$
	if $y=a+tv$ for $t\in \R$ and $\norm{a+tv-x_{0}}=r$, which gives us
	\begin{align*}
		\norm{a+tv-x_{0}}^{2}=\langle (a-x_{0})+tv,(a-x_{0})+tv \rangle=\norm{a-x_{0}}^{2}+2\langle a-x_{0},v\rangle t+t^{2}.
	\end{align*}
	This is a quadratic polynomial, which has at most two solutions.
\end{proof}

\begin{remark}[(In nips lines and spheres can intersect in many points)]
	Considering the shape of $\partial B_{1}(0)$ in $(\R^{2},\norm{\cdot}_{\infty})$ we see that the conclusion of Lemma \ref{intersectionofsphereandline} does not hold
	if we drop the assumption, that $(V,\norm{\cdot})$ has a norm induced by an inner product.
\end{remark}

\begin{lemma}[(Circumradius of points on spheres in $(\R^{2},\norm{\cdot}_{\infty})$)]\label{circumradiusofspheresinnorminfty}
	We consider the Banach space $(\R^{2},\norm{\cdot}_{\infty})$. Let $x_{0}\in \R^{2}$ and $\rho>0$. For all $d\in (0,\infty]$ exist $u,v,w\in \partial B_{\rho}(x_{0})$,
	such that $r(u,v,w)=d$.
\end{lemma}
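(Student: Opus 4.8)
The plan is to reduce to the unit sphere and then, for each prescribed value of $d$, exhibit an explicit triple of boundary points, using the intermediate value theorem to fill whole intervals of radii. First I would normalise. Since $\norm{u-v}_\infty$ is translation invariant we may take $x_{0}=0$, and since multiplying $u,v,w$ by $\lambda>0$ scales all three mutual distances, hence $r(u,v,w)$, by $\lambda$, while $\lambda\,\partial B_{1}(0)=\partial B_{\lambda}(0)$, it suffices to treat $\rho=1$ (the map $d\mapsto\rho d$ is a bijection of $(0,\infty]$ onto itself). Recall that $\partial B_{1}(0)$ is the boundary of the square $[-1,1]^{2}$. The value $d=\infty$ is then immediate: the points $(-1,1),(0,1),(1,1)$ lie on $\partial B_{1}(0)$ and are collinear -- equivalently their mutual distances $1,1,2$ satisfy $1+1=2$, so the Cayley--Menger determinant vanishes -- whence $r=\infty$; this is the mechanism behind Example \ref{fourpointsnotembeddable}.

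For finite $d$ I would use three one-parameter families of triples, chosen so that the resulting ranges of $r$ overlap at their endpoints and together exhaust $(0,\infty)$. First, for $s\in(0,1]$ take $u_{s}=(1,1-s)$, $v_{s}=(1,1)$, $w_{s}=(1-s,1)$ on the two edges through the corner $(1,1)$; all three mutual $\norm{\cdot}_\infty$-distances equal $s$, so the triangle is equilateral and the circumradius formula from the introduction gives $r(u_{s},v_{s},w_{s})=s/\sqrt3$, running over $(0,1/\sqrt3]$. Next, for $t\in[0,1]$ take $u=(1,0)$, $v=(1,1)$, $w_{t}=(-t,1)$; then $\norm{u-v}_\infty=1$ and $\norm{v-w_{t}}_\infty=\norm{u-w_{t}}_\infty=1+t$, so the formula yields $r(t)=(1+t)^{2}/\sqrt{(3+2t)(1+2t)}$, which is continuous with $r(0)=1/\sqrt3$ and $r(1)=4/\sqrt{15}$; by the intermediate value theorem its image contains $[1/\sqrt3,4/\sqrt{15}]$. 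Finally, for $t\in(0,1]$ take $u=(-1,0)$, $v=(1,0)$, $w_{t}=(t,1)$; here $\norm{u-v}_\infty=2$, $\norm{v-w_{t}}_\infty=1$ and $\norm{u-w_{t}}_\infty=1+t$, so $r(t)=2(1+t)/\sqrt{t(4+t)(4-t^{2})}$, continuous on $(0,1]$ with $r(1)=4/\sqrt{15}$ and $r(t)\to\infty$ as $t\to0^{+}$, so that its (connected) image, containing $4/\sqrt{15}$ and being unbounded, contains $[4/\sqrt{15},\infty)$.

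Since $(0,1/\sqrt3]\cup[1/\sqrt3,4/\sqrt{15}]\cup[4/\sqrt{15},\infty)=(0,\infty)$, together with the case $d=\infty$ every value in $(0,\infty]$ is attained. The routine part is checking pairwise distinctness (clear in each family for the relevant parameter ranges) and carrying out the $\norm{\cdot}_\infty$ distance computations, keeping track of which branch of the maximum is active. The one point that needs a little care is the behaviour of the third family as $t\to0^{+}$: the limiting triple $(-1,0),(1,0),(0,1)$ is \emph{not} collinear in $\R^{2}$, yet its $\norm{\cdot}_\infty$-distances $2,1,1$ force it to be isometric to a collinear triple, so its Cayley--Menger determinant vanishes and $r=\infty$ -- and it is exactly this degeneration that supplies the unbounded tail. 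The other thing to get right is simply the bookkeeping that makes the three radius-ranges meet at $1/\sqrt3$ and $4/\sqrt{15}$ and leave no gap.
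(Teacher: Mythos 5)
Your proposal is correct and follows essentially the same route as the paper: reduce by translation and scaling to the unit sphere, exhibit explicit parametrised triples on the boundary of the square, evaluate the circumradius formula and use continuity to cover $(0,\infty)$, with a metrically degenerate (collinear) triple supplying $d=\infty$. The only difference is bookkeeping: you use three families where the paper uses two (its second family, an isosceles triangle with sides $s,s,2$, already covers $[2/\sqrt{3},\infty)$ as $s\downarrow 1$), but your distance computations, overlaps at $1/\sqrt{3}$ and $4/\sqrt{15}$, and the limiting degeneration are all verified correctly.
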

\begin{proof}
	Without loss of generality we can assume $x_{0}=0$. By scaling, i.e. $r(cu,cv,cw)=cr(u,v,w)$ for $c>0$, we only have to verify our claim for, say, $\rho=1$.\\
	\textbf{Step 1}
		Choose $u=(-1,1-s)$, $v=(-1+s,1)$ and $w=(-1,1)$ for $s\in (0,2)$. If we embed $u,v,w$ isometrically in the Euclidean plane we get an equilateral
		triangle with side lengths $s$, whose circumradius is given by
		\begin{align*}
			r(u,v,w)=\frac{s^{3}}{\sqrt{3s^{4}}}=\frac{s}{\sqrt{3}},
		\end{align*}
		so that we proved the proposition for $d\in (0,2/\sqrt{3})$.\\
	\textbf{Step 2}
		Choose $u=(-1,1-s)$, $v=(1,1-s)$ and $w=(0,1)$ for $s\in (1,2]$. By isometrically embedding these points in the Euclidean plane we get an isosceles triangle 
		with side lengths $s,s$ and $2$, such that
		\begin{align*}
			r(u,v,w)=\frac{2s^{2}}{\sqrt{(2+2s)\cdot 2\cdot 2 \cdot (-2+2s)}}=\frac{s^{2}}{2\sqrt{s^{2}-1}},
		\end{align*}
		which proves the proposition for $[2/\sqrt{3},\infty)$. The remaining case $r(u,v,w)=\infty$ was already treated in Example \ref{fourpointsnotembeddable}.
\end{proof}

\end{appendix}

\bibliography{smalllibrary.bib}{}
\bibliographystyle{amsalpha}
\bigskip
\noindent
\parbox[t]{.8\textwidth}{
Sebastian Scholtes\\
Institut f{\"u}r Mathematik\\
RWTH Aachen University\\
Templergraben 55\\
D--52062 Aachen, Germany\\
sebastian.scholtes@rwth-aachen.de}

\end{document}